 \definecolor{lin}{RGB}{240,0,0}
 \definecolor{paleblue}{RGB}{0,9,255}
\newcommand{\map}[3]{#1: #2 \rightarrow #3}
\newcommand{\setdef}[2]{\{#1 \; | \; #2\}}
\newcommand{\st}{\ensuremath{\operatorname{s.t.}}}
\newcommand{\real}{\ensuremath{\mathbb{R}}}
\newcommand{\prob}{\ensuremath{\mathbb{P}}}
\newcommand{\realnonnegative}{\ensuremath{\mathbb{R}}_{\ge 0}}
\newcommand{\until}[1]{\{1,\dots, #1\}}
\newcommand{\subscr}[2]{#1_{\textup{#2}}}
\newcommand{\supscr}[2]{#1^{\textup{#2}}}
\newcommand{\vect}[1]{\mathbf{#1}}
\newcommand{\vectorones}[1]{\vect{1}_{#1}}
\newcommand{\vectorzeros}[1]{\vect{0}_{#1}}
\newcommand{\Norm}[1]{\|#1\|}
\newcommand{\trans}[1]{{#1}^\top}
\newcommand{\conv}{\ensuremath{\operatorname{conv}}}
\newcommand{\dom}{\ensuremath{\operatorname{dom}}}
\newcommand{\untilinterval}[2]{\{{#1},\dots, {#2}\}}
\newcommand{\ODAA}{\textsc{Online Data Assimilation Algorithm}\xspace}
\DeclareMathOperator*{\argmin}{argmin}
\newtheoremstyle{breaknote}%
  {\item{\theorem@headerfont
          ##1\ ##2\theorem@separator}\hskip\labelsep\relax}%
  {\item{\theorem@headerfont
          ##1\ ##2\ (##3)\theorem@separator}\hskip\labelsep\relax}
\theoremstyle{breaknote}
\newtheorem{assumption}{Assumption}[section]
\newtheorem{theorem}{Theorem}[section]
\newtheorem{lemma}{Lemma}[section]
\title{Online data assimilation in distributionally robust optimization*}
\author{D. Li$^{1}$ and S. Mart{\'\i}nez$^{1}$
\thanks{*This research
was developed with funding from the DARPA (Lagrange) contract
N660011824027. The views, opinions and/or findings expressed are those
of the author and should not be interpreted as representing the
official views or policies of the Department of Defense or the
U.S. Government.}
\thanks{$\textsuperscript{\textcopyright}$ 2018 IEEE.  Personal use of this material is permitted.  Permission from IEEE must be obtained for all other uses, in any current or future media, including reprinting/republishing this material for advertising or promotional purposes, creating new collective works, for resale or redistribution to servers or lists, or reuse of any copyrighted component of this work in other works. DOI: 10.1109/CDC.2018.8619159.}
\thanks{$^{1}$D. Li and S. Mart{\'\i}nez are with the Department of Mechanical and Aerospace Engineering, University of California San Diego, La Jolla, CA 92092, USA
        {\tt\small lidan@ucsd.edu; soniamd@ucsd.edu}}%
}
\begin{document}

\maketitle

\begin{abstract}
  This paper considers a class of real-time decision making problems
  to minimize the expected value of a function that depends on a
  random variable $\xi$ under an unknown distribution \prob. In this
  process, samples of $\xi$ are collected sequentially in real time,
  and the decisions are made, using the real-time data, to guarantee
  out-of-sample performance. We approach this problem in a
  distributionally robust optimization framework and propose a novel
  \ODAA for this purpose. This algorithm guarantees the out-of-sample
  performance in high probability, and gradually improves the quality
  of the data-driven decisions by incorporating the streaming data. We
  show that the \ODAA guarantees convergence under the streaming data,
  and a criteria for termination of the algorithm after certain number
  of data has been collected.
\end{abstract}
\vspace*{-0.25cm}
\section{Introduction}
\vspace*{-0.25cm}
Online data assimilation is of benefit in many applications that
require real-time decision making under uncertainty, such as optimal
target tracking, sequential planning problems, and robust quality
control. In these problems, the uncertainty is often represented by a
multivariate random variable that has an unknown distribution. Among
available methods, distributionally robust optimization (DRO) has
attracted attention due to its capability to handle data with unknown
distributions while providing out-of-sample performance guarantees
with limited uncertainty samples. To quantify the uncertainty and make
decisions that guarantee the performance reliably, one often needs to
gather a large number of samples in advance. Such requirement,
however, is hard to achieve under scenarios where acquiring samples is
expensive, or when real-time decisions must be made. Further, when the
data is collected over time, it remains unclear what the best the
procedure is to assimilate the data in an ongoing optimization
process.  Motivated by this, this work studies how to incorporate
finitely streaming data into a DRO problem, while guaranteeing
out-of-sample performance via the generation of time-varying
certificates.

\textit{Literature Review:} Optimization under uncertainty is a
popular research area, and as such available methods include
stochastic optimization~\cite{AS-DD-AR:14} and robust
optimization~\cite{AB-LEG-AN:09}. Recently, data-driven
distributionally robust optimization has regained popularity thanks to
its out-of-sample performance guarantees, see e.g.~\cite{AC-JC:17-allerton,AC-JC:17-tac,PME-DK:17,
  RG-AJK:16} and references therein. In this setup, one defines a set
of distributions or \textit{ambiguity set}, which contains the true
distribution of the data-generating system with high
probability. Then, the out-of-sample performance of the data-driven
solution is obtained as the worst-case optimization over the ambiguity
set.
An attractive way of designing these sets is to consider a ball in the
space of probability distributions centered at a reference or
most-likely distribution constructed from the available data. In the
space of distributions, the popular distance metric is the Prokhorov
metric, $\phi$-divergence and the
Wasserstein distance~\cite{AC-JC:17-allerton,PME-DK:17}. Here, following the
paper~\cite{AC-JC:17-allerton}, which proposes a distributed optimization
algorithm for multi-agent settings, we use the Wasserstein distance as it leads to a tractable reformulation of DRO problems.
However, available algorithms in~\cite{AC-JC:17-allerton,PME-DK:17} do not consider the
update of the data-driven solution over time, which serves as the
focus of this work. In terms of the algorithm design, our work
connects to various convex optimization methods~\cite{SB-LV:04} such as the Frank-Wolfe (FW) Algorithm (e.g., conditional gradient algorithm), the Subgradient Algorithm, and their variants, see
e.g.~\cite{CH:74,SLJ-MJ:15} and references therein. Our emphasis on the
convergence of the data-driven solution obtained through a sequence of
optimization problems contrasts with typical optimization algorithms
developed for single (non-updated) problems.

\textit{Statement of Contributions:} Our starting point is a
distributionally robust optimization problem formulation setting
of~\cite{PME-DK:17,AC-JC:17-allerton}, where we further consider that the limited
realizations of the multivariate random variable in the problem are
revealed and collected sequentially over time. As the probability
distribution of the random variable is unknown, we aim to find and
update a real-time data-driven solution based on streaming data. To
guarantee the performance of the data-driven solution with certain
reliability, we follow a DRO approach to solve a worst-case
optimization problem that considers all the probability distributions
in ambiguity sets given as a neighborhood of the empirical
distribution under the Wasserstein metric. Our first contribution is
the generation of such performance guarantee for any real-time
data-driven solution. We achieve this by first finding an equivalent
convex optimization problem over a simplex, and then specializing the
algorithm for efficiently generating a performance certificate of the
data-driven solution with a certain reliability requirement. Based on
the fact that the performance guarantee of data-driven solution with
high probability, our second contribution is the design of a scheme to
find an optimal data-driven solution with the best performance
guarantee under the same reliability. As new data is revealed and
collected sequentially, we specialize the proposed scheme to
assimilate the streaming data.  We show that the resulting \ODAA is
provably correct in the sense that the reliability of the
out-of-sample performance guarantee for the generated data-driven
solution converges to 1 as the number of data samples grows to
infinity, and the data-driven solution with certain performance
guarantee is available any time as soon as the algorithm finish
generating the initial certificate. A convergence analysis of the
proposed algorithm is given, under a user-defined optimality
tolerance. We finally illustrate the performance of the proposed
algorithm in simulation.
\vspace*{-0.25cm}
\section{Preliminaries}\label{sec:pre}
\vspace*{-0.25cm}
\textit{Notations:} Let $\real^m$, $\realnonnegative^m$ and
$\real^{m\times d}$ denote respectively the $m$-dimensional Euclidean
space, the $m$-dimensional nonnegative orthant, and the space of $m
\times d$ matrices, respectively. We use the shorthand notations
$\vectorzeros{m}$ for the column vector $\trans{(0,\cdots,0)} \in
\real^m$, $\vectorones{m}$ for the column vector $\trans{(1,\cdots,1)}
\in \real^m$, and $\vect{I}_m \in \real^{m \times m}$ for the identity
matrix. We let $\vect{x} \in \real^m$ denote a column vector with the
dimension $m$ and $\trans{\vect{x}}$ represents its transpose. We say
a vector $\vect{x} \geq 0$, if all its the entries are
nonnegative.
We use subscripts to index vectors and superscripts to indicate the
component of vector, i.e., $\vect{x}_k \in \real^m$ for $k \in
\{1,2,\ldots,n\}$ and $\vect{x}_k:=\trans{(\vect{x}_k^1,\ldots,
  \vect{x}_k^m)}$. We use $\vect{x}^{i:j}$ to denote the column vector
$\trans{(\vect{x}^i,\vect{x}^{i+1},\ldots,\vect{x}^j)} \in
\real^{j-i+1}$ and $(\vect{x};\vect{y}) \in \real^{m+d}$ indicates the
concatenated column vector from $\vect{x} \in \real^m$ and $\vect{y}
\in \real^d$. An $1$-norm of the vector $\vect{x} \in \real^m$ is
denoted by $\Norm{\vect{x}}$. For matrices $A \in \real^{m \times d}$
and $B \in \real^{p \times q}$, we let $A \oplus B$ denote their
direct sum. The shorthand notation $\oplus_{i=1}^m A_i$ represents
$A_1 \oplus \cdots \oplus A_m$. Given a set of points $I$ in
$\real^m$, we let $\conv(I)$ indicate its convex hull. The gradient of
a real-valued function $\map{f}{\real^m}{\real}$
is written as $\nabla_{\vect{x}}f(\vect{x})$. The $\supscr{i}{th}$
component of the gradient vector is denoted by
$\nabla_if(\vect{x})$. We call the function $f$ \textit{proper} on
$\real^m$ if $f(\vect{x}) < + \infty$ for at least one point $\vect{x}
\in \real^m$ and $f(\vect{x}) >- \infty$ for all $\vect{x} \in
\real^m$. We use $\dom f$ to denote the effective domain of the proper
function $f$, i.e., $\dom f:=\setdef{\vect{x} \in \real^m}{f(\vect{x})
  <+ \infty}$. We say a function $\map{F}{\mathcal{X}\times
  \mathcal{Y}}{\real}$ is \textit{convex-concave} on
$\mathcal{X}\times \mathcal{Y}$ if, for any point
$(\tilde{\vect{x}},\tilde{\vect{y}}) \in \mathcal{X}\times
\mathcal{Y}$, $\vect{x} \mapsto F(\vect{x},\tilde{\vect{y}})$ is
convex and $\vect{y} \mapsto F(\tilde{\vect{x}},\vect{y})$ is
concave.

\textit{Numerical Optimization Methods:} There are mainly two types of
Numerical Optimization methods that serve as the main ingredients of
our \ODAA. One type is given by Frank-Wolfe Algorithm (FWA) variants
and the other is the Subgradient Algorithm. For the Subgradient Algorithm please refer
to~\cite{DPB-AN-AO:03a,YN:13}.

\textit{The Frank-Wolfe Algorithm over a unit simplex.} To solve
convex programs over a unit simplex, here we introduce the FWA
Algorithm following~\cite{SLJ-MJ:15,CH:74}. We define the
$m$-dimensional unit simplex as $\Delta_m:=\setdef{\lambda \in
  \real^m}{\trans{\vectorones{m}} \lambda=1, \; \lambda \geq 0}$. Let
${\Lambda}_{m}$ be the set of all extreme points for the simplex
$\Delta_m$. Consider the minimization of a convex function
$f(\vect{x})$ over $ \Delta_m$; we refer to this problem by $(*)$ and
denote by $\vect{x}^{\star}$ an optimizer of $(*)$. We refer to a
${\vect{x}}^{\epsilon}$ as an $\epsilon$-optimal solution for $(*)$,
if ${\vect{x}}^{\epsilon} \in \Delta_m$ and
$f({\vect{x}}^{\epsilon})-f(\vect{x}^{\star}) \le \epsilon$. We define
a FW search point $\vect{s}^{(k)}$ for the current iteration $k$ at
the feasible point $\vect{x}^{(k)}$, if $\vect{s}^{(k)}$ is an extreme
point such that $\vect{s}^{(k)} \in \argmin_{\vect{x} \in
  \Delta_m} \trans{\nabla
  f(\vect{x}^{(k)})}(\vect{x}-\vect{x}^{(k)})$. With this search point
we define the FW direction at $\vect{x}^{(k)}$ by
$\subscr{d}{FW}^{(k)}:=\vect{s}^{(k)}-\vect{x}^{(k)}$. The classical
Frank-Wolfe Algorithm solves the problem $(*)$ to
${\epsilon}$-optimality by iteratively finding a FW direction and then
solving a line search problem over this direction until an
${\epsilon}$-optimal solution $\vect{x}^{(k)}$ is found, certified by
$\eta^{(k)}:=-{\nabla f(\vect{x}^{(k)})} \cdot \subscr{d}{\small FW}^{(k)}
\leq \epsilon$. Away-step Frank-Wolfe (AFW) Algorithm is an extension of the FWA we used in the following sections, and a linear convergence rate property of the AFW Algorithm is stated in the online version of this paper~\cite{DL-SM:18-extended} for completeness.

\vspace*{-0.25cm}
\section{Problem Description}\label{sec:ProbStat}
\vspace*{-0.25cm}
Consider a decision-making problem given by
\begin{equation}
  \inf\limits_{\vect{x} \in \real^d}{\mathbb{E}_{\prob} [f(\vect{x},\xi)] },
  \label{eq:P}\tag{P}
\end{equation}
where the decision variable $\vect{x}$ on $\real^d$ is to be
determined, the random variable $\map{\xi}{\Omega}{\real^m}$ is
induced by the probability space $(\Omega,\mathcal{F},\prob)$, and the
expectation of $f$ is taken w.r.t.~the unknown distribution
$\prob \in \mathcal{M}(\mathcal{Z})$.
It is not possible to evaluate the objective of~\eqref{eq:P} under
${\vect{x}}$ because $\prob$ is unknown.

This section sets up the framework of an efficient \ODAA that adapts
the decision-making process by using streaming data, i.e.,
\textit{independent and identically distributed} (iid) realizations of
the random variable $\xi$.  Then, we adapt the distributionally
robust optimization approach
following~\cite{AC-JC:17-allerton,PME-DK:17}, to complete the framework.
We omit all proofs in the paper for simplicity and we just report
an outline of the main ideas of the paper. Please see the online version~\cite{DL-SM:18-extended} for more details.

Let $\{\hat{\vect{x}}^{(r)}\}_{r=1}^{\infty}$ be a sequence of
decisions where for each iteration $r$ the decision
$\hat{\vect{x}}^{(r)}$ is feasible for~\eqref{eq:P}. In our \ODAA we
generate $\{\hat{\vect{x}}^{(r)}\}_{r=1}^{\infty}$ while sequentially
collecting iid realizations of the random variable $\xi$ under
$\prob$, denoted by $\hat{\xi}_{n}$, $n=1,2,\ldots$.  This defines a
sequence of streaming data sets, $\hat{\Xi}_{n} \subseteq
\hat{\Xi}_{n+1}$, for each $n$. W.l.o.g. %
we assume that each data set $\hat{\Xi}_{n+1}$ consists of just one
more new data point, i.e., $\hat{\Xi}_{n+1}= \hat{\Xi}_{n} \cup
\{\hat{\xi}_{n+1}\}$ and $\hat{\Xi}_{1}=\{ \hat{\xi}_{1} \}$.
The time between updates of $\hat{\Xi}_n$ and
$\hat{\Xi}_{n+1}$ corresponds to certain time period, which we refer
to as the \textit{$\supscr{n}{th}$ time period}.
The decision sequence obtained during this period is a subsequence of
$\{\hat{\vect{x}}^{(r)}\}_{r=1}^{\infty}$, labeled by
$\{\hat{\vect{x}}^{(r)}\}_{r=r_n}^{r_{n+1}}$. The objective of our
algorithm is to make real-time decisions for~\eqref{eq:P} that have a
potentially low objective value, while adapting the information from
the current data set $\hat{\Xi}_{n}$.

To quantify the quality of the decisions $\{\hat{\vect{x}}^{(r)}\}_{r=1}^{\infty}$,
we introduce the following terms. For each $r$ and the $\supscr{n}{th}$ time period we call
the decision $\hat{\vect{x}}^{(r)} \in \real^d$ \textit{a proper data-driven solution}
of~\eqref{eq:P}, if $\hat{\vect{x}}^{(r)}$ is feasible and its \textit{out-of-sample
  performance}, defined by ${\mathbb{E}_{\prob}
  [f(\hat{\vect{x}}^{(r)},\xi)] }$, satisfies the following \textit{performance guarantee}:
\begin{equation}
  \prob^n({\mathbb{E}_{\prob} [f(\hat{\vect{x}}^{(r)},\xi)] } \leq
  \hat{J}_{n}(\hat{\vect{x}}^{(r)}))\geq 1- \beta_{n} ,
\label{eq:perfgua}
\end{equation}
where the \textit{certificate} $\hat{J}_{n}$ is a function of
$\hat{\vect{x}}^{(r)}$ that indicates the goodness of the performance under
the data set $\hat{\Xi}_{n}$. The \textit{reliability} $(1- \beta_{n})
\in (0,1) \subset \real$ governs the choice of the solution
$\hat{\vect{x}}^{(r)}$ and the resulting certificate
$\hat{J}_{n}(\hat{\vect{x}}^{(r)})$. Finding an approximate
certificate is much easier than finding an exact certificate
$\hat{J}_{n}$ in practice.  Based on this, we call a solution
$\hat{\vect{x}}^{(r)}$ \textit{$\epsilon_1$-proper}, if it
satisfies~\eqref{eq:perfgua} with a approximate certificate,
$\hat{J}_n^{\epsilon_1}$, such that $\hat{J}_{n}(\hat{\vect{x}}^{(r)})
-\hat{J}_{n}^{\epsilon_1}(\hat{\vect{x}}^{(r)}) \leq \epsilon_1$.  The
certificates $\hat{J}_{n}(\hat{\vect{x}}^{(r)})$ and
$\hat{J}_{n}^{\epsilon_1}(\hat{\vect{x}}^{(r)})$, which depend on
$\hat{\vect{x}}^{(r)}$ and the data set $\hat{\Xi}_{n}$, provide an
upper bound to the optimal value of~\eqref{eq:P} with high confidence
$(1-\beta_{n})$ and are to be constructed carefully.

In each time period $n$, given a reliability level $1-\beta_{n}$, our
goal is to approach to an $\epsilon_1$-proper data-driven solution
with a low certificate.  Motivated by this we call any proper
data-driven solution \textit{$\epsilon_2$-optimal}, labeled as
$\hat{\vect{x}}_n^{\epsilon_2}$, if
$\hat{J}_{n}(\hat{\vect{x}}_n^{\epsilon_2}) -\hat{J}_{n}(\vect{x})
\leq \epsilon_2$ for all $\vect{x} \in \real^d$.  Then, for any
$\epsilon_2$-optimal and $\epsilon_1$-proper data-driven solution
$\hat{\vect{x}}_n^{\epsilon_2}$ with certificate
$\hat{J}_{n}^{\epsilon_1}(\hat{\vect{x}}_n^{\epsilon_2})$ and
$\epsilon_1\ll \epsilon_2$, we have the following performance
guarantee:
\begin{equation}
  \prob^n({\mathbb{E}_{\prob} [f(\hat{\vect{x}}_n^{\epsilon_2},\xi)] } \leq
  \hat{J}_{n}^{\epsilon_1}(\hat{\vect{x}}_n^{\epsilon_2}) + \epsilon_1)\geq 1- \beta_{n} .
\label{eq:epsiperfgua}
\end{equation}

We describe now the procedure of the \ODAA to
solve~\eqref{eq:P}. Given tolerance parameters $\epsilon_1$ and
$\epsilon_2$, a sequence of data sets $\{\hat{\Xi}_{n}\}_{n=1}^{N}$
and strictly decreasing confidence levels $\{\beta_{n}\}_{n=1}^{N}$
with $N \rightarrow \infty$ such that $\sum_{n=1}^{\infty} \beta_{n} <
\infty$, the algorithm aims to find a sequence of $\epsilon_2$-optimal
and $\epsilon_1$-proper data-driven solutions,
$\{\hat{\vect{x}}_n^{\epsilon_2}\}_{n=1}^{N}$, associated with the
sequence of the certificates
$\{\hat{J}_n^{\epsilon_1}(\hat{\vect{x}}_n^{\epsilon_2}) \}_{n=1}^{N}$
so that the performance guarantee~\eqref{eq:epsiperfgua} holds for all
$n$. Additionally, as the data streams to infinity, i.e., $n
\rightarrow \infty$ with $N=\infty$, there exists a large enough $n_0$ such that the
algorithm terminates after processing the data set
$\hat{\Xi}_{n_0}$. The algorithm returns a final data-driven solution
$\hat{\vect{x}}_{n_0}^{\epsilon_2}$ such that the performance holds
almost surely, i.e., $\prob^{\infty}({\mathbb{E}_{\prob}
  [f(\hat{\vect{x}}_{n_0}^{\epsilon_2},\xi)] } \leq
\hat{J}_{n_0}^{\epsilon_1}(\hat{\vect{x}}_{n_0}^{\epsilon_2}) +
\epsilon_1)=1$, and meanwhile guarantees the quality  of the certificate
$\hat{J}_{n_0}^{\epsilon_1}(\hat{\vect{x}}_{n_0}^{\epsilon_2})$ to be
close to the optimal objective value of the Problem~\eqref{eq:P}.

To achieve this, consider that
the data set $\hat{\Xi}_{n}$ has been received. We then start by
cheaply constructing a sequence of data-driven solutions
$\hat{\vect{x}}^{(r)}$ with $r \ge r_{n}$, based on the data set
$\hat{\Xi}_{n}$. After a finite number of iterations, if no new data
has been received, the algorithm reaches $r=r_{n+1}$ such that
$\hat{\vect{x}}^{(r_{n+1})} = \hat{\vect{x}}_n^{\epsilon_2}$ is
$\epsilon_2$-optimal, i.e.,
${J}_n^{\epsilon_1}(\hat{\vect{x}}^{(r_{n+1})})$ is $(\epsilon_1
+\epsilon_2)$-close to
${J}_n^{\star}:=\hat{J}_{n}(\hat{\vect{x}}_n^{\star})$ with
$\hat{\vect{x}}_n^{\star} \in \argmin_{x}{\hat{J}_{n}(x)}$. After a
new data point is received, the algorithm finds the next
$\epsilon_2$-optimal data-driven solution
$\hat{\vect{x}}_{n+1}^{\epsilon_2}$ and its certificate
$\hat{J}_{n+1}^{\epsilon_1}(\hat{\vect{x}}_{n+1}^{\epsilon_2})$ with
higher reliability $1 - \beta_{n+1}$. In this way, online
data can be assimilated over time while refining the constructed
$\epsilon_2$-optimal data-driven solutions
$\{\hat{\vect{x}}_n^{\epsilon_2}\}_{n=1}^{\infty}$ with corresponding
certificates
$\{\hat{J}_n^{\epsilon_1}(\hat{\vect{x}}_n^{\epsilon_2})\}_{n=1}^{\infty}$
that guarantee performance with high confidence
$\{1-\beta_{n}\}_{n=1}^{\infty}$.

When the algorithm receives new data set $\hat{\Xi}_{n+1}$ before
reaching to $r=r_{n+1}$, it safely starts from the current data-driven solution
$\hat{\vect{x}}^{(r)}$. The algorithm then
proceeds similarly on the data set $\hat{\Xi}_{n+1}$ by updating the
subsequence index $r_{n+1}$ to the current $r$.

Next, we focus on how to design the certificates based on the
following assumption for $f$:
{
\begin{assumption}[Convexity-concavity %
  and coercivity] %
  The known proper function $\map{f}{\real^d \times \real^m}{\real}$,
  ${(\vect{x},\xi)}\mapsto{f(\vect{x},\xi)}$ is continuously
  differentiable, convex in $\vect{x}$, concave in $\xi$ and
  $f(\vect{x},\tilde{\xi}) \rightarrow +\infty$ as $\Norm{\vect{x}}
  \rightarrow +\infty$ for all $\tilde{\xi} \in
  \real^m$. \label{assump:1}
\end{assumption}
}

\textit{Certificate design via DRO theory:} To design a certificate
$\hat{J}_{n}(\hat{\vect{x}})$ for a given  data-driven
solution $\hat{\vect{x}}$, one can  first use the data set $\hat{\Xi}_{n}$
from $\prob$ to estimate an empirical distribution, $\hat{\prob}^n$,
and let
${\mathbb{E}_{\mathbb{\hat{P}^{\it n}}} [f(\hat{\vect{x}},\xi)] }$ be
the candidate certificate for the performance
guarantee~\eqref{eq:perfgua}. However, such certificate only results in an approximation of
the out-of-sample performance if $\prob$ is unknown
and~\eqref{eq:perfgua} cannot be guaranteed in
probability. Following~\cite{AC-JC:17-allerton,PME-DK:17}, we are to determine an
\textit{ambiguity set} $\mathcal{\hat{P}}_n$ containing all the
possible probability distributions supported on $\mathcal{Z} \subseteq
\real^m$ that can generate $\hat{\Xi}_{n}$ with high confidence. Then
with the given feasible solution $\hat{\vect{x}}$, it is plausible to
consider the worst-case expectation of the out-of-sample performance
for all distributions contained in $\mathcal{\hat{P}}_n$. Such
worst-case distribution offers an upper bound for the out-of-sample
performance with high probability.

Denote by $\subscr{\mathcal{M}}{lt}(\mathcal{Z}) \subset
\mathcal{M}(\mathcal{Z})$ the set of light-tailed probability measures
in $\mathcal{M}(\mathcal{Z})$, we make following assumption:
\begin{assumption}[Light tailed unknown distributions]
  It is assumed that $\prob \in
  \subscr{\mathcal{M}}{lt}(\mathcal{Z})$, i.e., there exists an
  exponent $a>1$ such that: $b:= \mathbb{E}_{\prob}
  [\exp(\Norm{\xi}^a)] < \infty$. \label{assump:2}
\end{assumption}

Assumption~\ref{assump:2} validates the modern measure concentration
result~{\cite[Theorem~2]{NF-AG:15}} on
$\subscr{\mathcal{M}}{lt}(\mathcal{Z})$, which provides an intuition
for considering the Wasserstein ball
$\mathbb{B}_{\epsilon}(\hat{\prob}^n)$ of center $\hat{\prob}^n$ and radius $\epsilon$ as the
ambiguity set $\mathcal{\hat{P}}_n$. Then equipped with the
Wassenstein ball, we are able to provide the certificate that ensures
the performance guarantee in~\eqref{eq:perfgua} for any sequence of
data-driven solutions $\{\hat{\vect{x}}^{(r)}\}_{r=1}^{\infty}$, by $\hat{J}_{n}(\hat{\vect{x}}^{(r)}):= \sup_{\mathbb{Q}  \in \mathcal{\hat{P}}_n} {\mathbb{E}_{\mathbb{Q}} [f(\hat{\vect{x}}^{(r)},\xi)] }$.

\textit{Worst-case distribution reformulation:} To get the certificate
$\hat{J}_{n}(\hat{\vect{x}}^{(r)})$, one needs to solve an infinite dimensional
optimization problem, which is generally hard. Luckily, with an
extended version of the strong duality results for moment
problem~\cite[Lemma~3.4]{AS-MG-MAL:01}, we can reformulate the optimization problem for $\hat{J}_{n}(\hat{\vect{x}}^{(r)})$ into a finite-dimensional convex programming problem:
\begin{equation}\small
\begin{aligned}
  \hat{J}_{n}(\hat{\vect{x}}^{(r)}):=
  \max\limits_{\vect{y}_1,\ldots,\vect{y}_n \in \real^m}&
  \frac{1}{n}\sum_{k=1}^{n}
  f(\hat{\vect{x}}^{(r)},\hat{\xi}_{k}-\vect{y}_k) ,\\
  \st \quad &\frac{1}{n} \sum_{k=1}^{n} \Norm{\vect{y}_k} \le
  \epsilon(\beta_{n}),
\end{aligned}
\label{eq:convJn}\tag{\small P1$_{n}^{(r)}$}
\end{equation}
where $\epsilon(\beta_{n})$ is the radius of of
$\mathbb{B}_{\epsilon(\beta_{n})}$ as calculated in~\cite{DL-SM:18-extended}.
Given an $\epsilon_1$-optimal solution $(\vect{y}^{\epsilon_1}_1,\ldots,\vect{y}^{\epsilon_1}_{n})$ of~\eqref{eq:convJn}, we denote a finite atomic probability measure at
$\hat{\vect{x}}^{(r)}$ in $\mathbb{B}_{\epsilon(\beta_{n})}$ by $\mathbb{Q}_{n}^{\epsilon_1}(\hat{\vect{x}}^{(r)}):=\frac{1}{n}\sum_{k=1}^{n} \delta_{\{\hat{\xi}_{k}-\vect{y}^{\epsilon_1}_k \}}$. Then,
$\mathbb{Q}_{n}^{\epsilon_1}$ is a worst-case distribution that can generate the
data set $\hat{\Xi}_{n}$ with high probability (no less than
$(1-\beta_{n})$).

The concavity requirement in Assumption~\ref{assump:1} ensures that~\eqref{eq:convJn} is a convex problem. Failure of Assumption~\ref{assump:1} may require us to find a relaxed problem of~\eqref{eq:convJn} in order for efficiently generating $\hat{J}_{n}(\hat{\vect{x}}^{(r)})$ in the next section.

\vspace*{-0.25cm}
\section{Certificate Generation} \label{sec:CG}
\vspace*{-0.25cm}
Given the tolerance $\epsilon_1$ and any
feasible solution $\hat{\vect{x}}^{(r)}$,
we present in this section the Certificate Generation Algorithm (CGA) for
efficiently obtaining $\hat{J}_n^{\epsilon_1}(\hat{\vect{x}}^{(r)})$
and the $\epsilon_1$ worst-case distribution,
$\mathbb{Q}_{n}^{\epsilon_1}(\hat{\vect{x}}^{(r)})$ of an
$\epsilon_1$-proper data-driven solution $\hat{\vect{x}}^{(r)}$ over
time, under the sequence of the data sets
$\{\hat{\Xi}_{n}\}_{n=1}^{N}$. To achieve this, we first reformulate
Problem~\eqref{eq:convJn} to a convex optimization problem over a
simplex. Then, we design the CGA to solve
the customized problem to an $\epsilon_1$-optimal solution
efficiently.

For online implementation we have the following
assumption on the computation of the gradient of the function~$f$:

\begin{assumption}[Cheap access of the gradients]
  For any $\vect{x} \in \real^d$, the gradient of the function
  {$\map{h^{\vect{x}}}{\real^m}{\real}$} for
  $h^{\vect{x}}(\vect{y}):=f(\vect{x},\vect{y})$ can be accessed
  cheaply.\label{assump:chgrad}
\end{assumption}

In the $\supscr{n}{th}$ time period with the data set $\hat{\Xi}_{n}$,
we consider the following convex optimization problem over
$\Delta_{2mn}$:
\begin{equation}
\begin{aligned}
 \max\limits_{\vect{v} \in \real^{2mn} }& \frac{1}{n}\sum_{k=1}^{n}
h_k^r((A_n\vect{v})^{(k-1)m+1:km}) ,\\
\st \quad &  \vect{v} \in \Delta_{2mn},
\end{aligned}
\label{eq:JoverSimplex}\tag{\small P2$_{n}^{(r)}$}
\end{equation}
where for each $k
\in \until{n}$, $\hat{\xi}_{k} \in \hat{\Xi}_{n}$ and
$\hat{\vect{x}}^{(r)} \in \real^d$, we define
$\map{h^r_k}{\real^m}{\real}$ as
\begin{center}
  {$h_k^r(\vect{y}):=f(\hat{\vect{x}}^{(r)},\hat{\xi}_{k}-\vect{y})$,} \end{center}
and the matrix $A_n:=[\oplus_{i=1}^{n}I_m, -\oplus_{i=1}^{n}I_m] \in
\real^{mn \times 2mn}$ where the first $mn$ columns of $A_n$
constitute the natural basis for the space $\real^{mn}$. The simplex
is defined by $\Delta_{2mn}:=\setdef{\vect{v} \in
  \real^{mn}}{\trans{\vectorones{2mn}}\vect{v}=n\epsilon(\beta_{n})
  ,\; \vect{v} \geq 0}$ and we denote by ${\Lambda}_{2mn}$ the set of
all the extreme points for the simplex $\Delta_{2mn}$.

One can prove that solving~\eqref{eq:convJn} is equivalent to solving~\eqref{eq:JoverSimplex} for any feasible solution $\hat{\vect{x}}^{(r)}$ in every time period $n$, we refer to the online version~\cite{DL-SM:18-extended} for details.

The Frank-Wolfe Algorithm variants, such as the Simplicial
Algorithm~\cite{CH:74} and the AFW algorithm~\cite{SLJ-MJ:15}, are
known to be well suited for problems of the
form~\eqref{eq:JoverSimplex}.
The advantage of these is that they can handle the constraints of
Problem~\eqref{eq:JoverSimplex} via linear programming subproblems
(LP) that result from the way in which the FW search point is found
in~\cite[Section~2]{DL-SM:18-extended}. Intuitively, the following is
done. For a number of iterations~$l$, the following problems are
solved alternatively:
\begin{equation}
\begin{aligned}
  \small \max_{\vect{v} \in \real^{2mn} }& \frac{1}{n}\sum_{k=1}^{n}
  \left\langle  \nabla h_k^r(\vect{y}^{(l-1)}_k), \cdots \right.\\ & \left.\hspace*{0.3cm} (A_n\vect{v})^{(k-1)m+1:km}-\vect{y}^{(l-1)}_k\right\rangle ,\\
  \st \quad & \vect{v} \in \Delta_{2mn},
\label{eq:LP}
\end{aligned}\tag{LP$^{(l)}$}
\end{equation}
\vspace*{-0.4cm}
\begin{equation}
\begin{aligned}
 \max\limits_{\gamma \in \real^{T+1} }& \frac{1}{n}\sum_{k=1}^{n}
h_k^r(\sum_{i=0}^{T}\gamma^i  \tilde{\vect{y}}_k^{(i)}) ,\\
\st \quad & \gamma \in \Delta_{T}.
\label{eq:CP}
\end{aligned}\tag{CP$^{(l)}$}
\end{equation}
Notice that the search points generated for the linear
subproblem~\eqref{eq:LP} at iteration $l$ are the extreme points of
the feasible set $\Delta_{2mn}$. We denote by $I_{n}^{(l)}$ the set of
these points. Considering the convex hull of $I_{n}^{(l)}$,
parametrized by the convex combination coefficients $\gamma$ of the
points in $I_{n}^{(l)}$, an implicit feasible set $\conv(I_{n}^{(l)})$
in a lower dimensional space can be constructed. Motivated by this,
our Certificate Generation Algorithm iteratively solves the linear
subproblem~\eqref{eq:LP}, enlarges the implicit feasible set
$\conv(I_{n}^{(l)})$, and then searches a maximizer of the objective
function of~\eqref{eq:JoverSimplex} over $\conv(I_{n}^{(l)})$
(represented as $\Delta_T$ in subproblem~\eqref{eq:CP}). This process
is repeated to the next iteration $l+1$, and follows until an
$\epsilon_1$-optimal solution is found.  Later we will see that the
set $I_{n}^{(l)}$ allows to generate the certificate when assimilating
data. We call this set the \emph{candidate vertex set.}

For the above problems, notice that the subproblem~\eqref{eq:LP}
maximizes a linear function over a simplex, therefore it is
computationally cheap and an optimizer $\vect{v}^{(l)}$ is
equivalently computed by choosing a sparse vector with only one
positive entry, i.e., an extreme point of the feasible set
of~\eqref{eq:LP}, such that the nonzero component of $\vect{v}^{(l)}$
has the largest weight in the linear cost function of
Problem~\eqref{eq:LP}.

We refer to the online version of this paper~\cite{DL-SM:18-extended}
for complete description of the Certificate Generation Algorithm
(denoted by~\cite[Algorithm~4]{DL-SM:18-extended}) and its finite convergence to achieve
$\hat{J}_n^{\epsilon_1}(\hat{\vect{x}}^{(r)})$ and
$\mathbb{Q}_{n}^{\epsilon_1}(\hat{\vect{x}}^{(r)})$ for any
data-driven solution $\hat{\vect{x}}^{(r)}$ in the $\supscr{n}{th}$
time period with the data set $\hat{\Xi}_{n}$.

The worst-case computational bound of the
Certificate Generation Algorithm at the iteration $l+1$, associated
with the candidate solution
$(\vect{y}^{(l)}_1,\ldots,\vect{y}^{(l)}_n)$, is
 $ \hat{J}_n(\hat{\vect{x}}^{(r)}) -\hat{J}_n^{\eta^{(l+1)}}(\hat{\vect{x}}^{(r)}) \le 2mn\kappa^l \rho$,
where $\kappa:=1-\frac{\mu_f}{4C_f} \in (0,1) \subset \real$ is
related to local strong convexity of $f$ over $\Delta_{2mn}$, and
$\rho:=\hat{J}_n(\hat{\vect{x}}^{(r)})-\hat{J}_n^{\eta^{(1)}}(\hat{\vect{x}}^{(r)})
\leq \eta^{(1)}$ quantifies the initial distance of the objective
function $\hat{J}_n$ and $\hat{J}_n^{\eta^{(1)}}$ at
$\hat{\vect{x}}^{(r)}$.  In other words, given the tolerance
$\epsilon_1$, in the worst case we need at least $l \geq \phi(n):=
{\log}_{\kappa}(\frac{\epsilon_1}{2mn\rho})$ computational steps to
find $\hat{J}_n^{\epsilon_1}(\hat{\vect{x}}^{(r)})$.

However, how to generate
$\hat{J}_n^{\epsilon_1}(\hat{\vect{x}}^{(r)})$ and
$\mathbb{Q}_{n}^{\epsilon_1}(\hat{\vect{x}}^{(r)})$ online is unclear
for each data-driven solution $\hat{\vect{x}}^{(r)}$. This is because
that as the time period $n$ moves, we need to not only obtain
$\hat{J}_n^{\epsilon_1}(\hat{\vect{x}}^{(r)})$ and
$\mathbb{Q}_{n}^{\epsilon_1}(\hat{\vect{x}}^{(r)})$ sufficiently fast,
but also finding them by solving the Problem~\eqref{eq:JoverSimplex}
under a different data set $\hat{\Xi}_{n}$. As the size of
$\hat{\Xi}_{n}$ grows, the dimension of the
Problem~\eqref{eq:JoverSimplex} increases. To deal these challenges,
our Certificate Generation Algorithm exploits the relationships among
the Problems~\eqref{eq:JoverSimplex} with different data set
$\hat{\Xi}_{n}$ by adapting the candidate vertex set
$I_{n}^{(l)}$.

When the average data streaming rate is slower than the computational
bound $\phi(1)$, we claim that Certificate Generation Algorithm can always
find the certificate for each data set $\hat{\Xi}_{n}$. This is
because in each time period $n$ on average, we only have $2mn$ extreme
points, and $2m(n-1)$ has been explored due to the adaptation of the
candidate vertex set $I_{n}^{(0)}$. This indicates that in the
worst-case situation the average data streaming rate should be lower
than this value, in order to efficiently update the certificate for
the sequence of the data-driven solutions.
\vspace*{-0.25cm}
\section{An $\epsilon_2$-optimal performance guarantee }
\label{sec:LowJ}
\vspace*{-0.25cm}
In this section, we approach the construction of a sequence of the
$\epsilon_2$-optimal data-driven solutions
$\{\hat{\vect{x}}_n^{\epsilon_2}\}_{n=1}^{\infty}$, associated with
$\epsilon_2$-lowest certificates
$\{\hat{J}_n^{\epsilon_1}(\hat{\vect{x}}_n^{\epsilon_2})\}_{n=1}^{\infty}$
over time, under the sequence of the data sets
$\{\hat{\Xi}_{n}\}_{n=1}^{\infty}$. Specifically in the
$\supscr{n}{th}$ time period, we start from
$\hat{\vect{x}}^{(r)}:=\hat{\vect{x}}^{(r_n)}$ with its associated
$\epsilon_1$-optimal certificate
$\hat{J}_n^{\epsilon_1}(\hat{\vect{x}}^{(r)})$, and as the iteration
$r$ grows, we are to find a sequence of $\hat{\epsilon}_1$-proper
data-driven solutions, $\{\hat{\vect{x}}^{(r)}\}_{r=r_n}^{r_{n+1}}$,
which converge to $\hat{\vect{x}}^{\epsilon_2}_{n}$ quickly and result in $\hat{J}_n^{\epsilon_1}(\hat{\vect{x}}_n^{\epsilon_2})$. We use a Subgradient Algorithm to obtain $\hat{\vect{x}}_{n}^{\epsilon_2}$, via a valid $\epsilon_1$-subgradient of the certificate function $\hat{J}_n^{\epsilon_1}(\hat{\vect{x}}^{(r)})$, which denoted by $g_n^{\epsilon_1}(\hat{\vect{x}}^{(r)})$ and can be computed as shown in the extended version~\cite{DL-SM:18-extended}.

However, for every time we generate a new data-driven solution
$\hat{\vect{x}}^{(r+1)}$, the $\epsilon_1$-optimal extreme
distribution $\hat{\mathbb{Q}}_{n}^{\epsilon_1}(\hat{\vect{x}}^{(r)})$
associated with the last solution $\hat{\vect{x}}^{(r)}$ may not be a
valid $\epsilon_1$-optimal extreme distribution for
$\hat{\vect{x}}^{(r+1)}$. To reduce the number of computations needed
to obtain the new certificate for $\hat{\vect{x}}^{(r+1)}$, we denote
by ${g}_{n}^{\epsilon^{(r)}}(\hat{\vect{x}}^{(r)})$ the
$\epsilon^{(r)}$-subgradient at $\hat{\vect{x}}^{(r)}$, where $
\epsilon^{(r)}$ may be greater than $ \epsilon_1$ for each $r$. Then
by properly designing a sequence $\{\epsilon^{(r)} \}$, upper bounded
by $\hat{\epsilon}_1$, and estimating the $\epsilon^{(r)}$-optimal
extreme distributions, we will achieve a suboptimal proper data-driven
solution efficiently.

Here, we employ the $\hat{\epsilon}_1$-Subgradient Algorithm with
$\hat{\epsilon}_1\gg\epsilon_1$, the divergent but square-summable
step size rule, and scaled direction as follows: $  \hat{\vect{x}}^{(r+1)}=\hat{\vect{x}}^{(r)}-\alpha^{(r)}
  {\hat{g}_{n}^{\hat{\epsilon}_1}(\hat{\vect{x}}^{(r)})}/{\max\{
    \Norm{ \hat{g}_{n}^{\hat{\epsilon}_1}(\hat{\vect{x}}^{(r)}) } \; ,
    \; 1 \} }.$
The
estimated $\hat{\epsilon}_1$-subgradient
$\hat{g}_{n}^{\hat{\epsilon}_1}(\hat{\vect{x}}^{(r)})$ at each
iteration $r$ is constructed and updated via the following
considerations. Every time we generate the $\epsilon_1$-optimal
certificate from the Certificate Generation Algorithm at iteration $r$, the
estimated $\hat{\epsilon}_1$-subgradient is constructed by
$\hat{\mathbb{Q}}_{n}^{\epsilon_1}(\hat{\vect{x}}^{(r)})$ using lemma for the easy access of the subgradients~\cite{DL-SM:18-extended}, i.e.,
${g}_{n}^{\epsilon_1}(\hat{\vect{x}}^{(r)})
\in \partial_{\hat{\epsilon}_1}
\hat{J}_{n}(\hat{\vect{x}}^{(r)})$. During the execution of the
$\hat{\epsilon}_1$-Subgradient Algorithm, we check for the
$\hat{\epsilon}_1$-optimality of the certificate generated from
$\hat{\mathbb{Q}}_{n}^{\epsilon_1}(\hat{\vect{x}}^{(r)})$ at each
subsequent iteration $\hat{r}$, using~\cite[Algorithm~3]{DL-SM:18-extended}. If the
obtained suboptimality gap is such that $\eta > \hat{\epsilon}_1$ at
$\hat{r} > r$, we generate a new $\epsilon_1$-optimal distribution
$\hat{\mathbb{Q}}_{n}^{\epsilon_1}(\hat{\vect{x}}^{(\hat{r})})$ via
Certificate Generation Algorithm and estimate the
$\hat{\epsilon}_1$-subgradient using
$\hat{\mathbb{Q}}_{n}^{\epsilon_1}(\hat{\vect{x}}^{(\hat{r})})$.
Otherwise, the certificate at $\hat{\vect{x}}^{(\hat{r})}$ is
constructed using
$\hat{\mathbb{Q}}_{n}^{\epsilon_1}(\hat{\vect{x}}^{(r)})$.

From the above construction, we see that each $\epsilon^{(r)}$,
associated with a $\hat{g}_{n}^{\hat{\epsilon}_1}(\hat{\vect{x}}^{(r)})$,
is such that $\epsilon^{(r)} \le \hat{\epsilon}_1$. Then, we have the
following lemma for the convergence of the
$\hat{\epsilon}_1$-Subgradient Algorithm in the $\supscr{n}{th}$ time period.

\begin{lemma}[Convergence of the $\hat{\epsilon}_1$-Subgradient Algorithm to the $\epsilon_2$-optimal solution given $\hat{\Xi}_{n}$]
  In each time period $n$ with an initial data-driven solution
  $\hat{\vect{x}}^{(r_n)}$, assume the subgradients defined by $\hat{g}_{n}^{\epsilon}(\hat{\vect{x}}^{(r)})$
  are uniformly bounded, i.e., there exists a constant $L>0$ such that
  $\Norm{ \hat{g}_{n}^{\epsilon}(\hat{\vect{x}}^{(r)}) } \leq L$ for
  all $r\geq r_{n}$ and $\epsilon \leq \hat{\epsilon}_1$. Let $\mu :=
  \max\{ L , \, 1 \}$. Given a predefined $\epsilon_2 >0$, and let the
  certificate tolerance $\epsilon_1$ and the subgradient bound
  $\hat{\epsilon}_1$ such that $0 < \epsilon_1 \ll \hat{\epsilon}_1 <
  \epsilon_2/ \mu$, then there exists a large enough number $\bar{r}$
  such that the designed $\hat{\epsilon}_1$-Subgradient
  Algorithm has the following performance bounds:
  \begin{equation*} \small
    \min\limits_{k\in \untilinterval{r_{n}}{r}} \{
    \hat{J}_{n}(\hat{\vect{x}}^{(k)})\} -
    \hat{J}_{n}(\hat{\vect{x}}_{n}^{\star}) \leq \epsilon_2, \quad \forall
    \; r \geq  \bar{r},
  \end{equation*}
  and terminates at the iteration $r_{n+1}:=\bar{r}$ with an
  $\epsilon_2$-optimal solution under the data set $\hat{\Xi}_{n}$.
\label{lemma:convergeX}
\end{lemma}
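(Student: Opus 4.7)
The plan is to adapt the classical convergence analysis of the $\epsilon$-subgradient method with divergent but square-summable step sizes, while carefully accounting for the normalization in~\eqref{eq:15}. Introduce the shorthand $J_r := \hat{J}_{n}(\hat{\vect{x}}^{(r)}) - \hat{J}_{n}(\hat{\vect{x}}_n^{\star})$, $d_r := \Norm{\hat{\vect{x}}^{(r)} - \hat{\vect{x}}_n^{\star}}^2$, and $s_r := \hat{g}_{n}^{\hat{\epsilon}_1}(\hat{\vect{x}}^{(r)})/\max\{\Norm{\hat{g}_{n}^{\hat{\epsilon}_1}(\hat{\vect{x}}^{(r)})},1\}$. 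By the uniform bound $\Norm{\hat{g}}\le L$ the scaling factor lies in $[1,\mu]$, hence $\Norm{s_r}\le 1$. The algorithmic construction described just before the lemma guarantees that each $\hat{g}_r$ is an $\epsilon^{(r)}$-subgradient of $\hat{J}_n$ at $\hat{\vect{x}}^{(r)}$ with $\epsilon^{(r)}\le\hat{\epsilon}_1$; applying Lemma~\ref{lemma:access_subg} at $z=\hat{\vect{x}}_n^{\star}$ yields
\begin{equation*}
\trans{\hat{g}_r}(\hat{\vect{x}}^{(r)}-\hat{\vect{x}}_n^{\star}) \ge J_r - \hat{\epsilon}_1.
\end{equation*}

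Next, I would expand $d_{r+1}$ via the update rule, bound the quadratic term by $(\alpha^{(r)})^2$ using $\Norm{s_r}^2\le 1$, and divide the linear term by $\max\{\Norm{\hat{g}_r},1\}\le\mu$. This yields, for every $r$ satisfying $J_r\ge\hat{\epsilon}_1$, the standard descent recursion
\begin{equation*}
d_{r+1} \;\le\; d_r - \frac{2\alpha^{(r)}}{\mu}(J_r-\hat{\epsilon}_1) + (\alpha^{(r)})^2.
\end{equation*}
If at some iterate $J_k<\hat{\epsilon}_1$ occurs, then since $\hat{\epsilon}_1<\epsilon_2/\mu\le\epsilon_2$ we already have $\min_kJ_k<\epsilon_2$ and are done. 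Otherwise, I telescope from $k=r_n$ to $r$ and use $d_{r+1}\ge 0$ to obtain
\begin{equation*}
\sum_{k=r_n}^{r}\alpha^{(k)}(J_k-\hat{\epsilon}_1) \;\le\; \frac{\mu}{2}\Bigl(d_{r_n}+\sum_{k=r_n}^{\infty}(\alpha^{(k)})^2\Bigr) \;=:\; B,
\end{equation*}
with $B<+\infty$ by square-summability. Dividing by $\sum_{k=r_n}^{r}\alpha^{(k)}$ and replacing the weighted average by its minimum yields $\min_{k\in\untilinterval{r_n}{r}}J_k \le \hat{\epsilon}_1 + B/\sum_{k=r_n}^{r}\alpha^{(k)}$.

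Because $\hat{\epsilon}_1<\epsilon_2/\mu$ and $\mu\ge 1$, the gap $\epsilon_2-\hat{\epsilon}_1$ is strictly positive; combined with the divergence $\sum_{k}\alpha^{(k)}=+\infty$, I can pick $\bar{r}$ large enough so that $B/\sum_{k=r_n}^{\bar{r}}\alpha^{(k)}\le\epsilon_2-\hat{\epsilon}_1$. This delivers $\min_{k\in\untilinterval{r_n}{r}}J_k\le\epsilon_2$ for all $r\ge\bar{r}$, and setting $r_{n+1}:=\bar{r}$ and returning the best-so-far iterate $\hat{\vect{x}}_{n}^{\epsilon_2}\in\argmin_{k\in\untilinterval{r_n}{\bar{r}}}\hat{J}_n(\hat{\vect{x}}^{(k)})$ produces the claimed $\epsilon_2$-optimal solution.

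The main subtlety, and where care is required, is the bookkeeping around the normalization $\mu$: the $\epsilon$-subgradient inequality controls $\trans{\hat{g}_r}(\hat{\vect{x}}^{(r)}-\hat{\vect{x}}_n^{\star})$, but the iteration uses the scaled direction $s_r$, so the effective bound on $\trans{s_r}(\hat{\vect{x}}^{(r)}-\hat{\vect{x}}_n^{\star})$ loses a factor $1/\mu$ and is only usable when $J_r-\hat{\epsilon}_1\ge 0$. This is precisely why the hypothesis $\hat{\epsilon}_1<\epsilon_2/\mu$ (rather than merely $<\epsilon_2$) is essential to close the gap in the final step. The remainder of the argument is standard manipulation of the divergent/square-summable step-size rule, and the $\epsilon_1\ll\hat\epsilon_1$ assumption ensures that the certificate-regeneration mechanism used to refresh $\hat{g}_r$ only introduces negligible additional error.
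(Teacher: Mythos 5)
Your proposal is correct and follows essentially the same route as the paper's proof: expand $\Norm{\hat{\vect{x}}^{(r+1)}-\hat{\vect{x}}_n^{\star}}^2$ under the normalized update, apply the $\hat{\epsilon}_1$-subgradient inequality, telescope from $r_n$ to $r$, and invoke the divergent/square-summable step sizes together with $\hat{\epsilon}_1<\epsilon_2/\mu$ to pick $\bar{r}$. The only (harmless) difference is bookkeeping: the paper splits the $+\hat{\epsilon}_1$ term out of the numerator before bounding the denominator by $\mu$ (arriving at $\mu\hat{\epsilon}_1$ in the limit and avoiding any sign issue), whereas you keep $J_r-\hat{\epsilon}_1$ together and handle the sign via a short case analysis, which yields the marginally tighter constant $\hat{\epsilon}_1$.
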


\vspace*{-0.25cm}
\section{Data Assimilation} \label{sec:DataAssimilate}
\vspace*{-0.25cm}
We now describe a full algorithm to assimilate data online. The whole \ODAA
starts from some random initial data-driven solution. Then, for each
given set of data points, we first generate its certificate via
Certificate Generation Algorithm, after which an $\epsilon$-proper
data-driven solution is obtained, then we execute the Subgradient
Algorithm to achieve a lower certificate. During the last set of
iterations, the certificate may be lost and Certificate Generation
Algorithm may have to be rerun again, and resume the Subgradient
Algorithm after obtaining a valid certificate. If no data points come
in, the algorithm terminates as soon as the Subgradient Algorithm
terminates.

When there is streaming data, the algorithm needs to incorporate new
data points every time they become available. Because of this, the
feasible set of the Problem~\eqref{eq:convJn} changes. This affects
the dimension of Problem~\eqref{eq:convJn}, which grows by $m$, and
results into an increase of the dimension of~\eqref{eq:LP} by
$2m$. Second, the reliability increase from $\beta_{n}$ to
$\beta_{n+1}$ results into a smaller radius $\epsilon(\beta_{n+1})$ of
the Wasserstein ball $\mathbb{B}_{\epsilon(\beta_{n+1})}$.

Depending on the stage the new data point comes in, different
strategies for generating initial point that is feasible for the new
optimization problem can be considered. When data comes in during the execution of the $\epsilon$-Subgradient Algorithm at iteration $r$, we use a current best
$\hat{\epsilon}_1$-proper data-driven solution as the initial
data-driven solution for the $\epsilon_2$-optimal data-driven solution
$\hat{\vect{x}}^{\epsilon_2}_{n+1}$, i.e.,
$\hat{\vect{x}}^{(r_{n+1})}:=\supscr{\hat{\vect{x}}}{best}_{n} \in
\argmin_{k\in
  \untilinterval{r_{n}}{r}}\{\hat{J}_{n}(\hat{\vect{x}}^{(j)})\}$.
	The other initial data, such as $(\vect{y}^{(0)}_1,\ldots,\vect{y}^{(0)}_{n})$, $I_{n}^{(0)}$ and $\{\tilde{\vect{y}}_k^{(i)}\}_{i \in I_{n}^{(0)}}$, can be constructed following the same idea as when data point comes in during the execution of Certificate Generation Algorithm, the details of which are in~\cite{DL-SM:18-extended}. By such scheme the online data can be assimilated into sequence of optimization problems~\cite[the Algorithm~5]{DL-SM:18-extended}.

The \ODAA has the anytime property, meaning that the performance
guarantee is provided anytime, as soon as the first
$\epsilon_1$-proper data-driven solution is found. The algorithm
then tries to make decisions that achieve lower certificates
with higher reliability until we achieve the lowest
possible certificate and guarantee the performance almost surely.

The transient behavior of the \ODAA is naturally affected by the data
streaming rate and the rate of convergence of intermediate algorithms
(the assimilation rate). To further describe the effect of the data streaming rate, we call the data set stream $\{\hat{\Xi}_{n}\}_{n=1}^{N}$ \textit{sufficiently
  slow in the $\supscr{n}{th}$ time period}, if we can find an
$\hat{\vect{x}}_n^{\epsilon_2}$ in the $\hat{\epsilon}_1$-Subgradient
Algorithm during the time period $n$.
When the data streaming rate and assimilation rate are the same, and they are sufficiently slow for all $n$, the
\ODAA guarantees to find a low certificate with a good data-driven solution as established by the following finite convergence result.
\begin{theorem}[Finite convergence of the \ODAA ]
  Given any tolerance $\epsilon_1$, $\epsilon_2$, $\epsilon_3>0$ and sufficiently
  slow data streaming sets $\{\hat{\Xi}_{n}\}_{n=1}^{\infty}$. Then, there exists a large
  enough number $n_0(\epsilon_3) >0$, such that the algorithm
  terminates in finite time with a sequence of $\epsilon_2$-optimal
  $\epsilon_1$-proper data-driven solutions
  $\{\hat{\vect{x}}_n^{\epsilon_2}\}_{n=1}^{n_0}$ associated with the
  sequence of the certificates
  $\{\hat{J}_n^{\epsilon_1}(\hat{\vect{x}}_n^{\epsilon_2})
  \}_{n=1}^{n_0}$ so that the performance
  guarantee~\eqref{eq:epsiperfgua} holds for all $n \leq n_0$. Moreover, we have a guaranteed $\epsilon_2$-optimal and
  $\epsilon_1$-proper data-driven solution
  $\hat{\vect{x}}_{n_0}^{\epsilon_2}$ and a certificate
  $\hat{J}_{n_0}^{\epsilon_1}(\hat{\vect{x}}_{n_0}^{\epsilon_2})$ such
  that the performance guarantee holds almost surely, i.e.,
  \begin{equation*}\small
    \begin{aligned}
      \prob^{\infty}({\mathbb{E}_{\prob}
        [f(\hat{\vect{x}}_{n_0}^{\epsilon_2},\xi)] } \leq
      \hat{J}_{n_0}^{\epsilon_1}(\hat{\vect{x}}_{n_0}^{\epsilon_2}) +
      \epsilon_1)=1,
    \end{aligned} \label{eq:asperf}
  \end{equation*}
  and meanwhile the quality of the designed certificate
  $\hat{J}_{n_0}^{\epsilon_1}(\hat{\vect{x}}_{n_0}^{\epsilon_2})$ is
  guaranteed, i.e., for all the rest of the data sets
  $\{\hat{\Xi}_{n}\}_{n=n_0}^{\infty}$, any element in the desired
  certificate sequence
  $\{\hat{J}_n^{\epsilon_1}(\hat{\vect{x}}_n^{\epsilon_2})
  \}_{n=n_0}^{\infty}$ satisfies
  \begin{equation*}
    \begin{aligned}
      \sup_{n \geq n_0}
      \hat{J}_{n}^{\epsilon_1}(\hat{\vect{x}}_{n}^{\epsilon_2})
      \leq J^{\star} + \epsilon_1 +\epsilon_2 +\epsilon_3,
    \end{aligned}
		\label{eq:perfbound}
  \end{equation*}
  where $J^{\star}$ is the optimal
  objective value for~\eqref{eq:P}.
\label{thm:assimilation}
\end{theorem}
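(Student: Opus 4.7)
\emph{Plan of proof.} The two halves of the statement can be handled by composing results already in hand, with the only genuinely new work being a convergence argument for $\hat J_n^\star\to J^\star$.

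For the first half ($N<\infty$), the plan is to combine Lemma~\ref{lemma:ConvergeJhat} with Lemma~\ref{lemma:convergeX} period by period: the former guarantees that Algorithm~\ref{alg:cert} produces $\hat J_n^{\epsilon_1}(\hat{\vect x}^{(r)})$ in finitely many steps, and the latter guarantees that the outer $\hat\epsilon_1$-Subgradient loop produces an $\epsilon_2$-optimal solution $\hat{\vect x}_n^{\epsilon_2}$ in finitely many outer iterations. Since the data stream is assumed sufficiently slow in every time period, both inner processes finish before the next data point arrives, so by induction on $n$ the algorithm returns the full sequence $\{(\hat{\vect x}_n^{\epsilon_2},\hat J_n^{\epsilon_1}(\hat{\vect x}_n^{\epsilon_2}))\}_{n=1}^N$, and the performance guarantee~\eqref{eq:epsiperfgua} for each $n$ follows directly by invoking Lemma~\ref{lemma:certgener} together with the defining bound $\hat J_n(\hat{\vect x}_n^{\epsilon_2})-\hat J_n^{\epsilon_1}(\hat{\vect x}_n^{\epsilon_2})\le \epsilon_1$.

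For the almost-sure statement~\eqref{eq:asperf}, the plan is to apply the Borel--Cantelli lemma to the events $E_n:=\{d_W(\prob,\hat{\prob}^n)\ge \epsilon(\beta_n)\}$. By construction of $\epsilon(\beta_n)$ in~\eqref{eq:epsirad} and Theorem~\ref{thm:MeasConc} we have $\prob^n(E_n)\le \beta_n$, and the assumed summability $\sum_n \beta_n<\infty$ then gives $\prob^\infty(\limsup_n E_n)=0$. Hence on an event of full $\prob^\infty$-measure there is a (sample-path dependent) index $n^\ast$ with $\prob\in \mathbb{B}_{\epsilon(\beta_n)}(\hat{\prob}^n)$ for every $n\ge n^\ast$. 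For every such $n$, the worst-case characterization in~\eqref{eq:cert} together with the $\epsilon_1$-proper relation yields the chain $\mathbb{E}_\prob[f(\hat{\vect x}_n^{\epsilon_2},\xi)]\le \hat J_n(\hat{\vect x}_n^{\epsilon_2})\le \hat J_n^{\epsilon_1}(\hat{\vect x}_n^{\epsilon_2})+\epsilon_1$, which is~\eqref{eq:asperf} once the algorithm's termination index $n_0$ is chosen to dominate $n^\ast$.

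For the quality bound~\eqref{eq:perfbound}, the plan is to show $\hat J_n^\star\to J^\star$ and then pick $n_0=n_0(\epsilon_3)$ large enough that $\hat J_n^\star\le J^\star+\epsilon_3$ beyond it; combined with the $\epsilon_2$-optimality $\hat J_n(\hat{\vect x}_n^{\epsilon_2})\le \hat J_n^\star+\epsilon_2$ and $\hat J_n^{\epsilon_1}\le \hat J_n$, this yields $\sup_{n\ge n_0}\hat J_n^{\epsilon_1}(\hat{\vect x}_n^{\epsilon_2})\le J^\star+\epsilon_2+\epsilon_3$, comfortably inside the stated bound that adds an extra $\epsilon_1$ of slack. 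The cleanest way to prove $\hat J_n^\star\to J^\star$ is through the reformulation of Theorem~\ref{thm:cvxredu}: since $\epsilon(\beta_n)\to 0$ (because $\log(c_1\beta_n^{-1})$ grows slowly relative to $n$), the feasible set of~\eqref{eq:convJn} collapses to $\vect y_k=0$, so $\hat J_n(\vect x)$ converges to the empirical mean $\frac1n\sum_k f(\vect x,\hat\xi_k)$, which by the strong law of large numbers converges to $\mathbb{E}_\prob[f(\vect x,\xi)]$; lifting this pointwise convergence to a uniform statement on compact sublevel sets and using the coercivity in Assumption~\ref{assump:1} to confine minimizers to a common compact set then passes the convergence through to the infimum. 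Finite termination is then immediate because the outer stopping rule in Algorithm~\ref{alg:full} triggers as soon as both inner loops converge, and each is finite by Lemmas~\ref{lemma:ConvergeJhat} and~\ref{lemma:convergeX}.

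\textbf{Main obstacle.} The delicate step is converting the shrinking of the Wasserstein ball into the quantitative claim $\hat J_n^\star\to J^\star$, because it requires simultaneous control of (i) continuity of $\mathbb{Q}\mapsto\mathbb{E}_\mathbb{Q}[f(\vect x,\xi)]$ in the Wasserstein metric, (ii) uniformity in $\vect x$ over a compact set to push convergence through the inner infimum, and (iii) an SLLN-style argument that governs $\hat{\prob}^n\to \prob$. The dual reformulation of Theorem~\ref{thm:cvxredu} is the key tool that makes (i) and (ii) tractable, but combining it cleanly with (iii), while simultaneously ensuring that the (random) index at which both the Borel--Cantelli event and the $\epsilon_3$-bound kick in is jointly finite almost surely, is where most of the technical effort sits.
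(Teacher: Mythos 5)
Your treatment of the finite-$N$ part and of the almost-sure guarantee~\eqref{eq:asperf} is essentially the paper's own argument: the first is the same composition of Lemma~\ref{lemma:ConvergeJhat} and Lemma~\ref{lemma:convergeX}, and for the second the paper applies Borel--Cantelli to the events $\{\mathbb{E}_{\prob}[f(\hat{\vect{x}}_n^{\epsilon_2},\xi)] \geq \hat{J}_{n}^{\epsilon_1}(\hat{\vect{x}}_n^{\epsilon_2}) + \epsilon_1\}$ rather than to your $E_n=\{d_W(\prob,\hat{\prob}^n)\ge\epsilon(\beta_n)\}$; since the former events are contained in the latter, the two are interchangeable and your version is, if anything, marginally cleaner.

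Where you genuinely diverge is the quality bound~\eqref{eq:perfbound}, and there your plan has real unfilled gaps. You set up the two-sided convergence $\hat{J}_n^{\star}\to J^{\star}$ via collapse of the Wasserstein ball, a strong law of large numbers, uniform convergence on compact sublevel sets, and confinement of minimizers by coercivity --- each of which is a nontrivial argument you do not supply (in particular, controlling $\sup_{\mathbb{Q}\in\mathcal{\hat{P}}_n}\mathbb{E}_{\mathbb{Q}}[f]$ by the center value $\mathbb{E}_{\hat{\prob}^n}[f]$ as $\epsilon(\beta_n)\to 0$ already requires a growth bound on $\xi\mapsto f(\vect{x},\xi)$, uniform in $\vect{x}$, and the uniform LLN over a compact set is not free). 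The paper avoids all of this by noting that only a one-sided bound at a single point is needed: fix a $\delta$-optimal solution $\vect{x}^{\delta}$ of~\eqref{eq:P} and chain $\hat{J}_{n}^{\epsilon_1}(\hat{\vect{x}}_{n}^{\epsilon_2}) \leq \hat{J}_{n}(\hat{\vect{x}}_{n}^{\star}) +\epsilon_2 \leq \hat{J}_{n}(\vect{x}^{\delta})+\epsilon_2 \leq \hat{J}_{n}^{\epsilon_1}(\vect{x}^{\delta}) +\epsilon_1 +\epsilon_2$, so that everything reduces to bounding $\hat{J}_{n}^{\epsilon_1}(\vect{x}^{\delta})$ at one fixed $\vect{x}^{\delta}$. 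That bound comes from the linear-growth estimate $f(\vect{x},\xi)\le \hat{L}(1+\Norm{\xi})$ (a consequence of concavity of $f$ in $\xi$ under Assumption~\ref{assump:1}) combined with the Kantorovich--Rubinstein dual representation, giving $\hat{J}_{n}^{\epsilon_1}(\vect{x}^{\delta}) \leq \mathbb{E}_{\prob}[f(\vect{x}^{\delta},\xi)] + \hat{L}\,d_W(\prob,\mathbb{Q}_n^{\epsilon_1}(\vect{x}^{\delta}))$, after which the triangle inequality, a second Borel--Cantelli application, and $\epsilon(\beta_n)\to 0$ finish the job with no uniformity in $\vect{x}$ required. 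If you want to salvage your route, the missing ingredients are precisely this Lipschitz/growth bound from concavity and the reduction to a single comparison point; as written, the step you yourself flag as the ``main obstacle'' is the step that is not proved.
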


\vspace*{-0.25cm}
\section{Simulation results}\label{sec:Sim}
\vspace*{-0.25cm}
In this section, we demonstrate the application of the \ODAA to find
an $\epsilon$-proper data-driven solution $x \in \real^{30}$ for
Problem~\eqref{eq:P}. We consider $N=50$ iid sample points $\{
\hat{\xi}_{k}\}_{k=1}^{N}$ streaming randomly in between every $1$ to
$3$ seconds with each data point $\hat{\xi}_{k} \in \real^{10}$ a
realization of the unknown distribution $\prob$. Here, we assume that
the unknown distribution is a mixture of the multivariate uniform
distribution on $[-2,2]^{10}$ and the multivariate normal distribution
$\mathcal{N}(2.5 \cdot\vectorones{10},4 \cdot I_{10})$. We assume the
cost function $\map{f}{\real^{30} \times \real^{10}}{\real}$ to be
$f(\vect{x},\xi):=
\trans{\vect{x}}A\vect{x}+\trans{\vect{x}}B\xi+\trans{\xi}C\xi$ with
random values for the positive semi-definite matrix $A \in \real^{30
  \times 30}$, $B \in \real^{30 \times 10}$ and negative definite
matrix $C \in \real^{10 \times 10}$. Let the reliability
$1-\beta_{n}:=1-0.95e^{1-\sqrt{n}}$ and use the parameter $c_1=2$,
$c_1=1$ to design the radius $\epsilon(\beta_{n})$ of the Wasserstein
ball. We sample the initial data-driven solution
$\hat{\vect{x}}^{(0)}$ from the uniform distribution
$[0,10]^{30}$. The tolerance for the algorithm is $\epsilon_1=10^{-5}
$, $\epsilon_2= 10^{-6}$, and $\epsilon_3=10^{-6}
$.

To evaluate the quality of the obtained $\epsilon$-proper data-driven
solution with the streaming data, we estimate the optimizer
of~\eqref{eq:P}, $\vect{x}^{\star}$, by minimizing the average value
of the cost function $f$ for a validation data set with
$\subscr{N}{val}=10^4$ data points randomly generated from the
distribution $\prob$ (in the simulation case $\prob$ is known). We
take the resulting objective value as the estimated optimal objective
value for Problem~\eqref{eq:P}, i.e.,
$J^{\star}:=\hat{J}^{\star}({\vect{x}}^{\star})$. We calculate
$\hat{J}^{\star}({\vect{x}}^{\star})$ using the underline distribution
$\prob$, serving as the true but unknown scale to evaluate the
goodness of the certificate obtained throughout the algorithm.

Figure~\ref{fig:Jx} shows the evolution of the certificate sequence
$\{\hat{J}_n^{\epsilon_1}(\hat{\vect{x}}^{(r)})
\}_{n=1,r=1}^{N,\infty}$ for the decision sequence
$\{\hat{\vect{x}}^{(r)}\}_{r=1}^{\infty}$. The blue line in the
Figure~\ref{fig:Jx} shows the relative goodness of the certificates
for the currently used $\epsilon_1$-proper data-driven solution
$\hat{\vect{x}}^{(r)}$ calibrated by the estimated optimal value
$J^{\star}$ over time. The red points indicate that a new certificate
$\hat{J}_{n+1}^{\epsilon_1}(\hat{\vect{x}}_{(r)}(t))$ is processing
when the new data set is incorporated, while at these time intervals
the old certificate
$\hat{J}_{n}^{\epsilon_1}(\hat{\vect{x}}_{n}^{\epsilon_2})$,
associated with the $\epsilon_2$-optimal and $\epsilon_1$-proper
data-driven solution $\hat{\vect{x}}_{n}^{\epsilon_2}$, is still valid
to guarantee the performance under the old reliability
$\beta_{n}$. This situation commonly happens when a new data set
$\hat{\Xi}_{n+1}$ is streamed in and a new certificate
$\hat{J}_{n+1}^{\epsilon_1}(\hat{\vect{x}}_{(r)}(t))$ is yet to be
obtained.  It can be seen that after a few samples streamed, the
obtained certificate becomes close to the estimated true optimal value
$J^{\star}$ within the $10\%$ range.
 \begin{figure}[tbp]
 	 \centering
 		\includegraphics[width=0.4\textwidth]{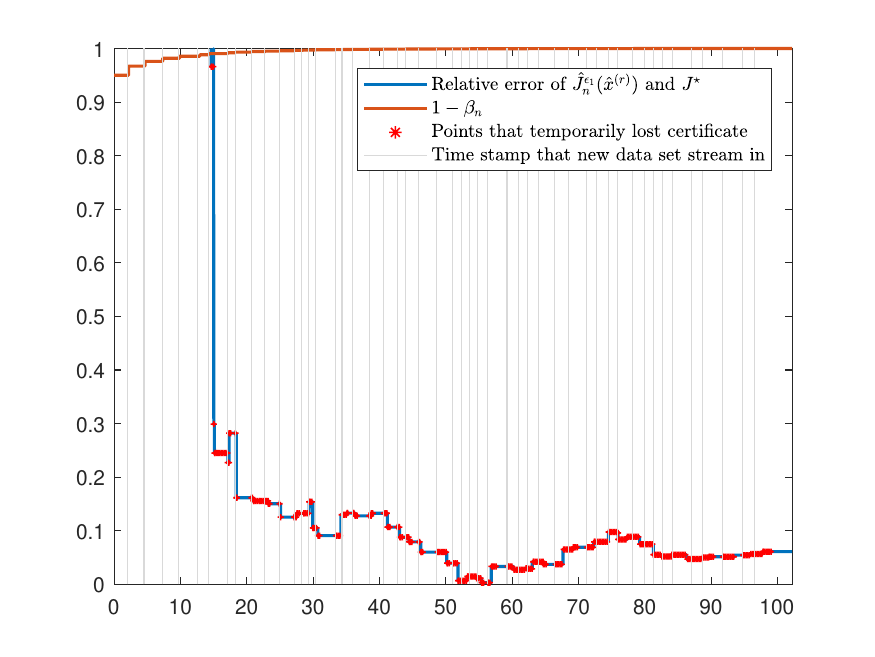}
                \caption{\small Relative goodness of the certificates for the
                  performance guarantee of an $\epsilon_1$-proper
                  data-driven solution. The $x$-axis is time
                  (seconds) and the y-axis
                  plots the relative goodness function
                  $R(t):=|{{(\hat{J}_{n}(\hat{\vect{x}}^{(r)}(t))
                      -J^{\star} )}/{J^{\star}}}|$. }
 	 \label{fig:Jx}
  \end{figure}
\vspace*{-0.25cm}
\section{Conclusions}\label{sec:Conclude}
\vspace*{-0.25cm}
In this paper, we have proposed the \ODAA for real-time data-driven solutions of~\eqref{eq:P} with guaranteed out-of-sample performance. Such solutions are available any time during the execution of the algorithm, and the optimal data-driven solution are approached with a
(sub)linear convergence rate. The algorithm terminates after
collecting sufficient amount of data to make a good decision. Future work will generalize the results for weaker assumptions of the problem and potentially extend the algorithm
to scenarios that include system dynamics.

 \bibliographystyle{IEEEtran}
 \bibliography{../../../../bib/alias,../../../../bib/SMD-add,../../../../bib/SM,../../../../bib/JC,../../../../bib/Main-sonia}
\end{document}